\newtheorem*{theorem*}{Theorem A}
\newtheorem*{theorem**}{Theorem B}
\newtheorem{theorem}{Theorem}[section]
\newtheorem{proposition}[theorem]{Proposition}
\newtheorem{lemma}[theorem]{Lemma}
\theoremstyle{definition}
\newtheorem{definition}[theorem]{Definition}
\newtheorem{example}[theorem]{Example}
\theoremstyle{remark}
\newtheorem{problem}[theorem]{Problem}
\title{Existence of a small cover over a 15-colorable simple 4-polytope}
\author{Djordje Barali\'{c}}
\address{ \scriptsize{Mathematical Institute SANU, Belgrade, Serbia }}
\email{djbaralic@mi.sanu.ac.rs}
\subjclass[2020]{Primary  57S12, 05C12. Secondary 52B11.}
\date{}
\begin{document}

\pagenumbering{arabic}

\begin{abstract}

The chromatic number for properly colouring the facets of a combinatorial simple $n$-polytope $P^n$ that is the orbit space of a quasitoric manifold satisfies the inequality $n\leq P^n\leq 2^n-1$. The inequality is sharp for $n=2$ but not for $n=3$ due to the Four Color theorem. In this note, we construct a simple 4-polytope admitting a characteristic map whose chromatic number equals $15$ and deduce that the predicted upper bound is attained for $n=4$. Analogues results are verified for the case of oriented small covers in dimensions $4$ and $5$.

\end{abstract}

\maketitle

\section{Introduction}

Toric topology emerged as a new mathematical discipline at the crossroads of algebraic topology, combinatorics, algebraic geometry, commutative algebra and symplectic geometry in the last decades of the 20th century. It studies spaces with toric actions whose orbit spaces have nice combinatorial structures such as simple polytopes and simplicial complexes. In their remarkable monograph \cite{BuPan1} Buchstaber and Panov summarized the main results obtained in this field until 2012 and gave an overview of the main open problems in this area.

In their seminar paper \cite{Davis} Davis and Januszkiewicz motivated the study of small covers and quasitoric manifolds. The orbit space of locally standard (real) torus action on a quasitoric manifold (small cover) is a combinatorial simple polytope, and its topology is closely related to the combinatorics of the corresponding combinatorial simple polytope and the characteristic matrix. Indeed, the classical problem of classification of quasitoric manifolds and small covers up to a weak equivariant diffeomorphism over a given combinatorial simple polytope $P$ is equivalent to the classification of all characteristic matrices over $P$ modulo the actions of $GL (n)$ and the group $\mathrm{Aut} (P)$ of all automorphism of $P$. Indeed, even the existence of a smooth structure on a quasitoric manifold is a non-trivial problem. The original construction, due to \cite{Davis}, was fully justified only around 2014 by Davis in \cite{Davis1}.The canonical one was defined in 2007 by Buchstaber, Panov, Ray in \cite{BPR} and is coming from the corresponding moment-angle manifold. In the last decades, many researchers worked on this and related questions and achieved significant results. The interested reader may find more about them in the survey paper \cite{Suh} by Masuda and Suh. However, despite the tremendous efforts, the fundamental question remained open.

The combinatorial characterization of simple $n$-polytopes admitting a characteristic map still needs to be discovered. Apart from the condition $n\leq \chi(P^n)\leq 2^n-1$ on the chromatic number of a simple polytope, established by Davis and Januszkiewicz, there was no significant progress in the general case. The two and three-dimensional cases are understood completely since every polygon and every simple 3-polytope is the orbit space of a small cover (quasitoric manifold). The celebrated Four Color theorem impacts the case $n=3$, for details see \cite{BuPan2}. The lower bound in the inequality is the best possible by \cite[Example~1.15]{Davis}, but up to the author's best knowledge, the sharpness of the upper bound still needs to be studied.

This article aims to popularize the interest in this question, as it is closely related to other problems in toric topology, such as the characterization of the Buchstaber invariant of a simple polytope and the lifting conjecture. In the paper, we give an algorithmic and combinatorial construction of a simple $4$-polytope admitting a characteristic map whose chromatic number is 15, the highest possible predicted by the general inequality. Section 2 briefly reviews the main facts about simple polytopes, small covers and quasitoric manifolds. The main result is formulated and proved in Section 4, where we present some applications of our method on the upper bound for the chromatic numbers of simple polytopes of dimensions 4 and 5 that are the orbit space of an oriented small cover. These results explain the limitation of our approach in higher dimensions.

\section{Preliminaries}

\subsection{Simple polytopes} In this section, we briefly present the basics of the theory of simple polytopes based on the classical monographs
\cite{Grunbaum} and \cite{Zieg}.

\textit{A convex polytope} is the convex hull of a
finite set of points in some $\mathbb{R}^n$. The \textit{dimension} of a polytope is the dimension of its
affine hull. Let us assume that a point $x\in \mathbb{R}^n$ is represented as
a column vector and that $\mathbf{a}\in
(\mathbb{R}^n)^\ast$ is a row vector. Then there exists a linear form $l_\mathbf{a} \colon
\mathbb{R}^n\rightarrow \mathbb{R}$ such that
$\mathbf{x}\mapsto \mathbf{ax}$. Moreover, any linear form $l \colon
\mathbb{R}^n\rightarrow \mathbb{R}$ has this form for some $\mathbf{a}\in (\mathbb{R}^n)^\ast$. If for all points $\mathbf{x}\in P\subseteq \mathbb{R}^n$ holds
inequality  $\mathbf{m} \mathbf{x}\leq r$, then we say that it \textit{valid} for a
convex polytope $P$. A \textit{face} of $P$ is any set of
the form
$$F=P\cap \left\{\mathbf{x}\in \mathbb{R}^n \left|\right. \mathbf{m} \mathbf{x}=
r\text{ and } \mathbf{m} \mathbf{x}\leq r \text{ a valid inequality}\right\}.$$ The dimension of a face is the dimension of its
affine hull.

As the obvious inequalities $\mathbf{0} \mathbf{x}\leq 0$ and
$\mathbf{0} \mathbf{x}\leq 1$ trivially hold, we see that the polytope $P$
itself and $\emptyset$ are faces of $P$. All other faces of $P$
are called \textit{proper} faces. The faces of dimension $0$ and $\dim (P)-1$ are called \textit{vertices} and \textit{facets}, respectively.
The faces of a polytope $P$ are polytopes of smaller dimensions, and
every intersection of a finite number of faces is a face of $P$,
\cite[Proposition~2.3]{Zieg}. All faces of a convex polytope $P$
form a poset $L:=L (P)$ \textit{face lattice} called the with respect to inclusion.

For two polytopes $P_1$ and $P_2$ having isomorphic face lattices $L
(P_1)$ and $L (P_2)$ we say they are
\textit{combinatorially equivalent}. A combinatorial polytope is a
class of combinatorially equivalent polytopes.

An \textit{ $n$-simplex} $\Delta^n$ is an $n$-dimensional polytope which is the convex hull of its $n+1$ vertices. We say that polytope $P$ is \textit{simplicial} if all its proper faces are simplices.

The \textit{polar set} $P^\ast \subset (\mathbb{R}^n)^\ast$ of a convex polytope $P\subset \mathbb{R}^n$ is defined by
$$P^\ast :=\left\{ \mathbf{c}\in (\mathbb{R}^n)^\ast \left|\right.
\mathbf{c x}\leq 1 \mbox{\, for all\,} \mathbf{x}\in P\right\}.$$
We refer to the combinatorial polytope $P^\ast$ as \textit{the
dual of the combinatorial polytope $P$}. The face lattice $L
(P^\ast)$ is the opposite of the face lattice $L (P)$ of $P$.

 A polytope $P$ is called \textit{simple} if its dual polytope $P^\ast$ is simplicial. Each face of a simple
polytope is a simple polytope. A simple polytope $P^n$ is characterized by the property that each vertex belongs to exactly $n$ facets. Any combinatorially polar polytope of a simple polytope is simplicial.

\subsection{Neighborly polytopes}

Let $P$ be an $n$-polytope. We say that $P$ is  \textit{$k$-neighborly} if any
subset of $k$ or fewer vertices is the vertex set of a face of $P$.
It can be verified that the simplex $\Delta^n$ is the only $k$-neighborly polytope for $k> \left
[\frac{n}{2}\right ]$. Thus, polytopes that are
$\left[\frac{n}{2}\right ]$-neighborly are of particular interests
and are called \textit{neighborly} polytopes.

The neighborly polytopes often appear in
combinatorics as solutions of various extremal
problems. They satisfy the upper bound predicted by Motzkin for
a maximal number of $i$-faces of an $n$-polytope with $m$ vertices.
This statement is known as the Upper Bound Theorem, which was
first proved by McMullen in \cite{Mullen1}.

The most known example of a neighborly $n$-polytope with $m$ vertices
is \textit{the cyclic polytope $C^n (m)$}. The
cyclic polytope $C^n (m)$ is the convex hull of $m$ points  on \textit{the
moment curve} $\gamma$ in $\mathbb{R}^n$,  defined by $\gamma :
\mathbb{R}\rightarrow \mathbb{R}^n$, $t\mapsto
\mathbf{\gamma}(t)=(t, t^2, \dots, t^n)\in \mathbb{R}^n$. The combinatorial class of $C^n (m)$ does not depend
on the specific choices of the points due to Gale's
evenness condition, see \cite[Theorem~0.7.]{Zieg}. Gale's result also implies that
cyclic polytopes are simplicial polytopes.

The combinatorial neighborly $n$-polytope with less than $n+4$ vertices is cyclic. However, for a fixed $n$, the number
of combinatorially different neighborly polytopes grows
superexponentially with the number of vertices $m$. This result follows from Barnette's construction from \cite{Barnette}  and Shemer from \cite{Shemer}. Despite the vast number of combinatorially different neighborly polytopes apart from cyclic polytopes, it is unknown if some `nice description characterizes another subclass.

Duals of simplicial neighborly $n$-polytopes are simple polytopes
with property that each $\left[ \frac{n}{2}\right]$ facets have
nonempty intersections. Such polytopes are also called
\textit{neighborly} polytopes, and in the rest of the paper, we use the term
neighborly polytopes for simple neighborly polytope.

\subsection{Small covers and quasitoric manifolds}

 Quasitoric manifolds and small covers appeared as topological generalizations of smooth projective  (real) toric varieties in the seminal paper \cite{Davis}. Since then, they have been extensively studied in
 toric topology. A detailed exposition on
 the subject can be found in Buchstaber and Panov's monographs
\cite{BuPan} and \cite{BuPan1}. Here we briefly review the primary
definition and results about them.

 Let us denote by $\mathbb{Z}_2=\{-1, 1\}$ and $S^1=\{z \left|\right. |z|=1\}$ are
multiplicative subgroups of real and complex numbers,
respectively. We denote $(S^1)^n$  by $T^n$. The group $\mathbb{Z}_2^n$ acts on $\mathbb{R}^n$ by coordinatewise multiplication. Analogously, the group $T^n$ acts on $\mathbb{C}^n$ and both actions are called \textit{the standard actions}.
 A \textit{$\mathbb{Z}_2^n$ ($T^n$)-manifold} is a differentiable manifold
with a smooth action of $\mathbb{Z}_2^n$ ($T^n$).

\begin{definition} A map $f\colon X\rightarrow Y$ between two $\mathbb{Z}_2^n$ ($T^n$)-spaces
$X$ and $Y$ is called weakly equivariant if for any $x\in X$ and
$g\in \mathbb{Z}_2^n$ ($T^n$) holds $$f (g\cdot x)=\psi (g) \cdot f(x),$$  where $\psi
\colon \mathbb{Z}_2^n\rightarrow \mathbb{Z}_2^n$ ($\psi
\colon T^n\rightarrow T^n$) is some automorphism of group  $\mathbb{Z}_2^n$ ($T^n$).
\end{definition}

Let $M$ be a $n$($2n$)-dimensional $\mathbb{Z}_2^n$ ($T^n$)-manifold. A
\textit{standard chart} on $M$ is an ordered pair $(U, f)$,
where $U$ is a $\mathbb{Z}_2^n$ ($T^n$)-invariant open subset of $M$ and $f$ is a
weak equivariant diffeomorphism from $U$ onto some
$\mathbb{Z}_2^n$ ($T^n$)-invariant open subset of $\mathbb{R}^n$ ($\mathbb{C}^n$). A \textit{standard
atlas} is an atlas which consists of standard charts. A $\mathbb{Z}_2^n$ ($T^n$)
action on a $\mathbb{Z}_2^n$ ($T^n$)-manifold $M$ is called locally standard
if manifold $M$ has a standard atlas. The orbit space for a
locally standard action is naturally regarded as a manifold with
corners.

\begin{definition}\label{gdm} A \textit{small cover (quasitoric manifold)} $\pi \colon M \rightarrow P^n$  is a smooth closed $n$ ($2n$)-dimensional $\mathbb{Z}_2^n$ ($T^n$)-manifold
admitting a locally standard ${\mathbb{Z}_2}^n$ ($T^n$)-action such that its orbit
space is a simple convex $n$-polytope $P^n$ regarded as a manifold
with corners.
\end{definition}

It is straightforward to check that two weakly equivariant small covers (quasitoric manifolds) have combinatorially equivalent orbit polytopes so
we do not have to distinguish combinatorially equivalent  simple polytopes in the above
definition.

Let $P^n$ be a simple polytope with $m$ facets $F_1$, $\dots$,
$F_m$. By Definition \ref{gdm}, it follows that every point in
$\pi^{-1} (\mathrm{rel.int} (F_i))$ has the same isotropy group
which is a one-dimensional subgroup of $\mathbb{Z}_2^n$ ($T^n$). We denote it by
$\mathbb{Z}_2 (F_i)$ ($S^1 (F_i)$).  Each small cover (quasitoric manifold) $\pi: M \rightarrow
P^n$ determines a \textit{characteristic map} $l_d$ on $P^n$
$$l_d : \{F_1, \dots, F_m\}\rightarrow{\mathbb{Z}_2}^n ({\mathbb{Z}}^n)$$ defined by mapping each facet of
$P^n$ to nonzero elements of ${\mathbb{Z}_2}^n ({\mathbb{Z}}^n)$ such that \\ $l_d (F_i)=
\boldsymbol{\lambda_i} = (\lambda_{1, i}, \dots, \lambda_{n, i})^t
\in {\mathbb{Z}_2}^n ({\mathbb{Z}}^n)$, where $\mathbf{\lambda}_i$ is a primitive integral
vector such that $$\mathbb{Z}_2 (F_i)=\left\{(t^{\lambda_{1, i}}, \dots,
t^{\lambda_{n, i}})| t\in \mathbb{R}, |t|=1\right\} (S^1 (F_i)=\left\{(t^{\lambda_{1, i}}, \dots,
t^{\lambda_{n, i}})| t\in \mathbb{C}, |t|=1\right\}).$$ From the
characteristic map we obtain an integer $(n\times m)$-matrix
$\Lambda_{\mathbb{Z}_2} (M):=(\lambda_{i, j})$ ($\Lambda_{\mathbb{Z}} (M):=(\lambda_{i, j})$) which is called
\textit{the characteristic matrix of $M$}. For the quasitoric manifold each
$\boldsymbol{\lambda}_i$ is determined up to a sign. Since the
$\mathbb{Z}_2^n$ ($T^n$)-action on $M$ is locally standard, the characteristic
matrix $\Lambda_{\mathbb{Z}_2} (M)$ ($\Lambda_{\mathbb{Z}} (M)$ ) satisfies the non-singularity
condition for $P^n$, i.e. if $n$ facets $F_{i_1}$, $\dots$,
$F_{i_n}$ of $P^n$ meet at a vertex, then $\left |\det
\Lambda_{\mathbb{Z}_2}^{(i_1, \dots, i_n)} (M) \right |=1$ ($\left |\det
\Lambda_{\mathbb{Z}_2}^{(i_1, \dots, i_n)} (M) \right |=1$), where
$\Lambda_{\mathbb{Z}_2}^{(i_1, \dots, i_n)}
(M):=(\boldsymbol{\lambda}_{i_1}, \dots,
\boldsymbol{\lambda}_{i_n})$($\Lambda_{\mathbb{Z}_2}^{(i_1, \dots, i_n)}
(M):=(\boldsymbol{\lambda}_{i_1}, \dots,
\boldsymbol{\lambda}_{i_n})$). Any integer $(n\times m)$-matrix
satisfying the non-singularity condition for $P^n$ is also called
\textit{a characteristic matrix on $P^n$}.

The construction of a small cover and a quasitoric manifold from
the {\it characteristic pair} $(P^n, \Lambda_{\mathbb{Z}_2})$ ($(P^n, \Lambda_{\mathbb{Z}})$) where
$\Lambda_{\mathbb{Z}_2}$ ($\Lambda_{\mathbb{Z}}$) is a characteristic matrix is described in \cite[Construction~1.5]{Davis},
\cite{BuPan1} and \cite[Construction~5.12]{BuPan}. For each
point $x\in P^n$, we denote the minimal face containing $x$ in its
relative interior by $F(x)$. The characteristic map $l$
corresponding to $\Lambda_{\mathbb{Z}_2}$ ($\Lambda_{\mathbb{Z}}$) is a map from the set of the
faces of $P^n$ to the set of subtori of $\mathbb{Z}_2^n$ ($T^n$) defined by
$$l (F_{i_1}\cap \dots \cap F_{i_k}):=
\mathrm{im}\left(l^{(i_1, \dots, i_k)} \colon \mathbb{Z}_2^k\, (T^k)\rightarrow
\mathbb{Z}_2^n\,(T^n)\right ),$$ where $l^{(i_1, \dots, i_k)}$ is the map
induced from the linear map determined by $\Lambda_{\mathbb{Z}_2}^{(i_1,
\dots, i_k)}$ ($\Lambda_{\mathbb{Z}}^{(i_1,
\dots, i_k)}$). A $\mathbb{Z}_2^n$ ($T^n$)-manifold $M$ over
simple polytope $P^n$ is obtained by setting $$M
(\Lambda_{\mathbb{Z}_2}):=(\mathbb{Z}_2^n\times P^n)/\sim_{l}\, (M
(\Lambda_{\mathbb{Z}}):=(T^n\times P^n)/\sim_{l}),$$ where
$\sim_{l}$ is an equivalence relation defined by $(t_1,
p)\sim_{l} (t_2, q)$ if and only if $p=q$ and $t_1 t_2^{-1}\in
l_d (F(q))$. The free action of $\mathbb{Z}_2^n$ ($T^n$) on $\mathbb{Z}_2^n\times P^n$ ($T^n\times P^n$)
obviously descends to an action on $(\mathbb{Z}_2^n\times P^n)/\sim_{l}$ ($(T^n\times P^n)/\sim_{l}$)
with quotient $P^n$. Simple polytope $P^n$ is covered by the open
sets $U_v$ obtained by deleting all faces not containing vertex
$v$ of $P^n$. Clearly, $U_v$ is diffeomorphic to $\mathbb{R}_+^n$ ($\mathbb{C}_+^n$),
so the space $(\mathbb{Z}_2^n\times P^n)/\sim_{l_d}$ ($(T^n\times P^n)/\sim_{l}$) is covered by open
sets $(\mathbb{Z}_2^n\times U_v)/\sim_{l_d}$ ($(T^n\times U_v)/\sim_{l}$) homeomorphic to
$\mathbb{R}^n$ ($\mathbb{C}^n$). We easily see that the transition maps are
diffeomorphisms, so $\mathbb{Z}_2^n$ ($T^n$)-action on $(\mathbb{Z}_2^n\times P^n)/\sim_{l}$ ($(T^n\times P^n)/\sim_{l}$)
is locally standard and $M (\Lambda_{\mathbb{Z}_2})$ ($M (\Lambda_{\mathbb{Z}})$ ) is a
small cover (quasitoric manifold) $\pi \colon M \rightarrow P^n$ over simple
convex $n$-polytope $P^n$.

The classification problem of small covers (quasitoric manifolds) over a given combinatorial simple polytope $P$ is closely related to determining the characteristic matrices over $P$. There are various types of classification: up to a homeomorphism, up to a diffeomorphism, up to a weakly equivariant diffeomorphic, etc. In the literature, the authors usually assume the following technical lemma.

\begin{lemma} \label{l1} Let $M$ be a small cover (quasitoric manifold) over $P^n$ with characteristic
map $l$. There exists a weak equivariant diffeomorphism
$f: M\rightarrow M$ induced by an automorphism $\psi$ of
$\mathbb{Z}_2^n$  ($T^n$) such that the characteristic matrix induced by $f$ has the
form $\left(I_{n\times n}| \ast\right)$ where $\ast$ denotes some
$n\times (m-n)$ matrix.
\end{lemma}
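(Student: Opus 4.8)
The plan is to realise the desired normal form by an explicit unimodular change of basis of the acting group, and then to check that this change of basis is induced by a weakly equivariant diffeomorphism. I will describe the argument for a quasitoric manifold; the small cover case is word for word the same with $\mathbb{Z}$ replaced by $\mathbb{Z}/2$, and is if anything simpler because the only unit of $\mathbb{Z}/2$ is $1$. Throughout I write $\Lambda$ for the characteristic matrix attached to $l$.

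First I would fix a vertex $v$ of $P^n$ and let $F_{i_1},\dots,F_{i_n}$ be the $n$ facets of $P^n$ meeting at $v$. By the non-singularity condition the submatrix $\Lambda^{(i_1,\dots,i_n)}=(\boldsymbol{\lambda}_{i_1},\dots,\boldsymbol{\lambda}_{i_n})$ has determinant $\pm 1$, hence it is invertible over $\mathbb{Z}$; I would set $A:=\bigl(\Lambda^{(i_1,\dots,i_n)}\bigr)^{-1}\in GL(n,\mathbb{Z})$, so that $\det A=\pm 1$. Then the columns of $A\Lambda$ indexed by $i_1,\dots,i_n$ are the standard basis vectors, and after permuting the facets so that these indices come first (a harmless relabelling of columns) $A\Lambda$ takes the shape $(I_{n\times n}\,|\,\ast)$. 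The next thing to check is that $A\Lambda$ is still a characteristic matrix on $P^n$: whenever facets $F_{j_1},\dots,F_{j_n}$ meet at a vertex we have $\bigl|\det(A\Lambda)^{(j_1,\dots,j_n)}\bigr|=|\det A|\cdot\bigl|\det\Lambda^{(j_1,\dots,j_n)}\bigr|=1$, so the non-singularity condition is preserved. This is precisely the place where it matters that we multiply $\Lambda$ on the \emph{left} by a unimodular matrix.

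Next I would promote $A$ to an automorphism $\psi=\psi_A$ of $T^n$ via the identification $\mathrm{Aut}(T^n)\cong GL(n,\mathbb{Z})$, and define $f\colon M(\Lambda)\to M(A\Lambda)$ by $[(t,p)]\mapsto[(\psi_A(t),p)]$, where the target carries the characteristic map $l'$ associated with $A\Lambda$. The step that needs care is the well-definedness of $f$ on the quotients: if $(t_1,p)\sim_l(t_2,p)$, i.e.\ $t_1t_2^{-1}\in l(F(p))$ with $F(p)=F_{j_1}\cap\dots\cap F_{j_k}$, then since $l(F(p))=\mathrm{im}\,l^{(j_1,\dots,j_k)}$ and postcomposing the linear map $\Lambda^{(j_1,\dots,j_k)}$ with $A$ yields $(A\Lambda)^{(j_1,\dots,j_k)}$, the element $\psi_A(t_1)\psi_A(t_2)^{-1}=\psi_A(t_1t_2^{-1})$ lies in $\mathrm{im}\,(l')^{(j_1,\dots,j_k)}=l'(F(p))$, so $(\psi_A(t_1),p)\sim_{l'}(\psi_A(t_2),p)$. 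The map induced by $\psi_{A^{-1}}$ is then a two-sided inverse; in the standard charts $(T^n\times U_v)/\!\sim$ the map $f$ is the product of a linear torus automorphism with $\mathrm{id}_{U_v}$, hence smooth; and $f(g\cdot[t,p])=\psi_A(g)\cdot f([t,p])$ shows it is weakly equivariant with respect to $\psi_A$. Viewing $M(A\Lambda)$ as $M$ through $f$, which is legitimate since small covers and quasitoric manifolds are considered only up to weak equivariant diffeomorphism, the characteristic matrix induced by $f$ is $A\Lambda$, already of the form $(I_{n\times n}\,|\,\ast)$.

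I expect the only genuinely delicate point to be this compatibility of $f$ with the two equivalence relations; once it is unwound from the definition of the characteristic map, the rest — invertibility of $\Lambda^{(i_1,\dots,i_n)}$, preservation of the non-singularity condition under left multiplication, and smoothness of $f$ in the charts $U_v$ — is routine bookkeeping. It is also worth remarking that the freedom in the choice of the vertex $v$ already shows that the normal form $(I_{n\times n}\,|\,\ast)$ is in general far from unique.
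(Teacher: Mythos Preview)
The paper does not actually supply a proof of this lemma; it is merely stated as a standard technical fact that ``authors usually assume'' and then used without further comment. Your argument is the expected one: pick a vertex, invert the corresponding unimodular $n\times n$ minor to obtain $A\in GL(n,\mathbb{Z})$ (resp.\ $GL(n,\mathbb{Z}_2)$), and realise left multiplication by $A$ as a weakly equivariant self-diffeomorphism via the automorphism $\psi_A$ of the acting group. The checks you carry out---preservation of the non-singularity condition under left multiplication by a unimodular matrix, well-definedness of $f$ on the quotient, and weak equivariance with respect to $\psi_A$---are all correct and constitute a complete proof.

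One small remark: your smoothness justification (``in the standard charts the map $f$ is the product of a linear torus automorphism with $\mathrm{id}_{U_v}$'') is slightly glib, since the identifications $(T^n\times U_v)/\!\sim_l\cong\mathbb{C}^n$ and $(T^n\times U_v)/\!\sim_{l'}\cong\mathbb{C}^n$ depend on $l$ and $l'$ respectively, so in the $\mathbb{C}^n$ coordinates $f$ is not literally $\psi_A\times\mathrm{id}$. What is true is that in these coordinates $f$ becomes a real-linear automorphism of $\mathbb{C}^n$ (the composite of the three linear identifications), which is certainly smooth; alternatively one can appeal to the canonical smooth structure coming from the moment-angle manifold, as the paper itself alludes to via the reference to \cite{BPR}. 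This is a cosmetic point and does not affect the validity of your argument.
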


\subsection{Chromatic number as an obstruction to the existence of a characteristic map}

The principal question of toric topology is to classify small covers and quasitoric manifolds over a given combinatorial simple polytope $P^n$. From the above discussion, we know this problem is related to classifying the characteristic maps over $P^n$. Indeed, by \cite[Nonexamples~1.23]{Davis}, we know that a $2$-neighborly simple polytope with
$m\geq 2^n$ does not admit a characteristic map. One of the main open problems in toric topology is to find a combinatorial description of the class of
polytopes $P^n$ admitting a characteristic map.

Rarely obstructions to the existence of a characteristic map are known. The most studied are those related to \textit{the chromatic number} of $P^n$.
Recall that the chromatic number of $P^n$ is the minimal integer $k$ such that there is a proper coloring of its facets into $k$ colors. Under proper coloring, we assume that no two distinct facets sharing a vertex have the same color.

Obviously, $\chi (P^n)\geq n$ for any simple polytope $P^n$. The
chromatic number of a $2$-dimensional simple polytope is clearly
equal to $2$ or $3$, depending on the parity of the number of its
facets. By the famous Four Color Theorem, we deduce that the
chromatic number of a $3$-dimensional polytope is $3$ or $4$. Simple polytopes $P^n$ such that $\chi (P^n)= n$ or $\chi (P^n)= n+1$ are the orbit spaces of small covers and quasitoric manifolds, see \cite[Example~1.15]{Davis}. The class of simple polytopes admitting a characteristic map contains
some important combinatorial simple polytopes, such as the simplex,
the cube, the permutahedron, polygons, $3$-dimensional polytopes
etc.

However, for $n\geq 4$, we do not have an analogue of the Four Color Theorem, so $\chi (P^n)$  can be arbitrarily high. A typical example is a dual of the cyclic polytope $C^n(m)$ whose chromatic number is $m$. An immediate obstruction at the chromatic number comes from the existence of a characteristic map. If a simple polytope $P^n$ is the orbit space of a small cover or quasitoric manifold, we have  at
most $2^n-1$ different possibilities for $\boldsymbol{\lambda}_i$
modulo $2$. Therefore, a coloring with no
more than $2^n-1$ colors arises from  a characteristic map, so it holds that
\begin{equation}\label{nej}
\chi (P^n)\leq 2^n-1.
\end{equation}

For $n=2$, the inequality \ref{nej} is sharp, while for $n=3$, it does not coincide with the bound implied by Four Color Theorem. In the next section, we show that the inequality is sharp for $n=4$ by providing an explicit example of a simple $4$-polytope admitting a characteristic map such that its chromatic number is $15$.

\section{Proof of the main theorem}

The following theorem is our main contribution.

\begin{theorem}\label{main} There exists a combinatorial simple $4$-polytope $P$ such that $\chi (P)=15$ that is the orbit space of a small cover and quasitoric manifold.
\end{theorem}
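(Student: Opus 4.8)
The plan is to realize $P$ as the dual of a neighborly simplicial $4$-polytope whose $15$ vertices are identified with the points of the projective space $PG(3,2)$, and to take that identification as the characteristic map.

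First I would reduce the statement to combinatorics. If a simple $4$-polytope $P$ admits a characteristic map, then reducing its characteristic matrix modulo $2$ properly colours the facets with at most $2^4-1=15$ colours, so $\chi(P)\le 15$; if moreover every two facets of $P$ meet — equivalently, share a vertex, since in a simple polytope the nonempty intersection of two facets is a face and hence contains a vertex — then the $15$ facets get pairwise distinct colours and $\chi(P)=15$. ``Every two facets meet'' says exactly that $P$ is $2$-neighborly, i.e.\ that $Q:=P^\ast$ is a $2$-neighborly — hence neighborly — simplicial $4$-polytope. By the Euler and Dehn--Sommerville relations such a $Q$ on $15$ vertices has $f_1=\binom{15}{2}=105$ edges and $f_3=f_1-f_0=90$ facets, each a $4$-element subset of the vertex set. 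Identifying the $15$ vertices of $Q$ with $\F_2^4\setminus\{0\}$, a characteristic map for $P$ over $\Z_2$ is precisely such an identification under which every facet of $Q$ becomes a linearly independent $4$-tuple of $\F_2^4$ — equivalently, no facet becomes a set of four coplanar points of $PG(3,2)$. Thus it suffices to build a neighborly simplicial $4$-polytope on $15$ labelled vertices, none of whose $90$ facets is a coplanar quadruple.

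The heart of the proof is this construction. Of the $\binom{15}{4}=1365$ four-element subsets of $PG(3,2)$, exactly $15\cdot\binom{7}{4}=525$ lie in one of the $15$ planes, so $840$ are admissible. I would assemble $90$ admissible subsets into the boundary complex of a neighborly simplicial $4$-polytope by a controlled procedure — for instance Shemer's sewing of new vertices onto a cyclic $4$-polytope, or an explicit point configuration or Gale diagram, with a computer-assisted search if needed — because such a procedure both certifies \emph{polytopality} (rather than merely producing a simplicial $3$-sphere) and keeps enough freedom to avoid the forbidden quadruples; the labelling itself is determined along the way, or fixed afterwards by a search over $\mathrm{Aut}(PG(3,2))\cong GL(4,\F_2)$. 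The construction is then certified by a finite check: writing out the $90$ facets and confirming that each associated $4\times 4$ matrix over $\F_2$ is nonsingular. This yields the small cover $M(\Lambda_{\Z_2})$ over $P:=Q^\ast$ with $\chi(P)=15$.

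For the quasitoric manifold I would lift the $\F_2$-matrix to an integral characteristic matrix. By Lemma~\ref{l1} I may assume it has the form $(I_4\mid A)$ over $\F_2$; lifting $A$ to a matrix with entries in $\{0,\pm 1\}$ and choosing signs suitably, one arranges that the $4\times 4$ minor at each of the $90$ vertices equals $\pm 1$ rather than merely being odd — once more a finite verification — after which $M(\Lambda_{\Z})$ is the required quasitoric manifold over $P$. The main obstacle is the middle step: neighborly simplicial $3$-spheres on $15$ vertices are abundant and many are non-polytopal, so one must use a construction that simultaneously witnesses polytopality and dodges all $525$ coplanar quadruples; it is exactly this interplay between the polytopality requirement and the projective-geometry colouring constraint that makes the example delicate, and, as the final section indicates, that causes the method to stall in higher dimensions.
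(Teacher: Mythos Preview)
Your approach is genuinely different from the paper's, and the difference is exactly where your gap lies. You try to build a neighborly simple $4$-polytope on precisely $15$ facets whose facet labelling by $\mathbb{F}_2^4\setminus\{0\}$ already satisfies the non-singularity condition. The paper does not attempt this. It starts from the dual of $C^4(15)$, accepts that non-singularity fails at certain ``bad'' edges (triples $F_1\cap F_2\cap F_3$ with $v_{F_1}+v_{F_2}+v_{F_3}=0$) and ``bad'' vertices (quadruples summing to $0$), and then \emph{truncates} the polytope along each such face. Each truncation introduces a new facet $F'$, and a colour for $F'$ is chosen so that non-singularity holds at all new vertices while the original $15$ facets continue to intersect pairwise; hence $\chi(P)$ stays $15$. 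The final $P$ in the worked example has $45$ facets, not $15$.

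Your ``heart of the proof'' paragraph is not a proof but a plan: you propose to find the required neighborly polytope by sewing or by computer search, without any argument that the $525$ coplanar quadruples can simultaneously be avoided in a polytopal $3$-sphere. This is not a technicality. By Hasui's result, cited in the paper, the dual of $C^4(m)$ admits no small cover once $m-4>4$, so the cyclic polytope on $15$ vertices is already excluded and one must exhibit a specific non-cyclic neighborly polytope. More decisively, the paper's final paragraph states explicitly that whether a simple neighborly $4$-polytope with $15$ facets admitting a characteristic map exists is an open question. Your proposal therefore hinges on a statement the author does not know how to establish, whereas the truncation argument sidesteps it entirely.
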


\begin{proof} Let us consider a neighborly $4$-polytope with $15$ facets. For example, we can take the dual of $C^4 (15)$. Let us map its facets surjectively onto the set of nonzero vectors of $\mathbb{Z}_2^4$. Let us denote by $v_F$ the image of a facet $F$ by the surjection. We are done if the non-singularity condition is fulfilled in all vertices, as the surjection defines the characteristic map we need.

Otherwise, the non-singularity condition may be broken in two ways.

The first one is on an edge of the polytope obtained as the intersection of three facets $F_1, F_2$ and $F_3$ such that $v_{F_1}+v_{F_2}+v_{F_3}=0$. We call such an edge bad. Another possibility is that the non-singularity condition fails in a vertex of the polytope that it is the intersection of four facets $F_1, F_2, F_3$ and $F_4$ such that $v_{F_1}+v_{F_2}+v_{F_3}+v_{F_4}=0$. We say that such a vertex is bad.

Now let us `resolve' the problem in a bad edge. Without lose of generality we may assume that $v_{F_1}=e_1, v_{F_2}=e_2$ and $v_{F_3}=e_1+e_2$. The endpoints of the bad edge are the vertices $F_1\cap F_2\cap F_3\cap F_4$ and $F_1\cap F_2\cap F_3\cap F_5$ where $F_4$ and $F_5$ are some facets of the polytope such that $v_{F_4}=a e_1+b e_2+ c e_3 +d e_4$ and $v_{F_5}=p e_1+q e_2+ r e_3 +s e_4$ where $e_i$, $i=1,\dots, 4$ is the standard bases and $a, b, c, d, p, q, r,s\in \{0, 1\}$ are integers such that $c+d>0$ and $r+s>0$. We truncate the polytope over the bad edge, and a new facet $F'$ arises from this operation. $F'$ is a triangular prism with vertices $F_1\cap F_2\cap F_4 \cap F'$, $F_1\cap F_3\cap F_4 \cap F'$, $F_2\cap F_3\cap F_4 \cap F'$, $F_1\cap F_2\cap F_5 \cap F'$, $F_1\cap F_3\cap F_5 \cap F'$ and $F_2\cap F_3\cap F_5 \cap F'$. We have that $\{c e_3+d e_4, r e_3+s e_4\}\subset \{e_3, e_4, e_3+e_4\}$ so there is a vector $v_{F'}\in\{e_3, e_4, e_3+e_4\}$ such that $v_{F'}\neq c e_3+d e_4$ and $v_{F'}\neq r e_3+s e_4$. Now, we directly verify that with $v_{F'}$, the non-singularity condition is satisfied in the new six vertices. The chromatic number of newly obtained polytope remains $15$; in the new polytope $F_1\cap F_2\cap F_3=\emptyset$, but pairwisely the intersections of $F_1$, $F_2$ and $F_3$ are nonempty.

Moreover, applying this algorithm successfully on the bad edges, we can get rid of them to get a simple $4$-polytope $Q$ with the chromatic number $15$ containing no bad edge. Now let us show how to `resolve' singularities in bad vertices. For a bad vertex $F_1\cap F_2\cap F_3\cap F_4$ without lose of generality we may assume that $v_{F_1}=e_1$, $v_{F_2}=e_2$, $v_{F_3}=e_3$ and $v_{F_4}=e_1+e_2+e_3$. We truncate $Q$ over the bad vertex to obtain a new facet $F'$. The vertices of the tetrahedra $F'$ are $F_1\cap F_2\cap F_3 \cap F'$, $F_1\cap F_2\cap F_4 \cap F'$, $F_1\cap F_3\cap F_4 \cap F'$ and $F_2\cap F_3\cap F_4 \cap F'$. The non-singularity condition we can obey by setting $v_{F'}=e_4$. Again, it is clear that we have kept the chromatic number the same, so we can continue truncation until we get a polytope $P$ having neither a bad vertex nor an edge. The vectors assigned to the facets of $P$ define the desired characteristic function.

\end{proof}

The construction above cannot be easily generalized to dimensions greater than $4$ due to the possible existence of a bad 2-face that can be a polygon with many sides. However, the product of the polytope $P$ from Theorem \ref{main} and the segment $I$ gives a combinatorially simple $5$-polytope that admits a characteristic map and whose chromatic number is $16$. Although this is still far away from the predicted upper bound $31$, it is greater than $7$, that is the chromatic number of the product of a simple $3$-polytope $P_1$ and a polygon $P_2$ such that $\chi (P_1)=4$ and $\chi (P_2)=3$. Recall that by \cite[Subsection~1.10]{Davis}, we know that if $M$ and $N$ are small covers (quasitoric manifolds) over simple polytopes $P^m$ and $Q^n$  then $M \times N$ is a small cover (quasitoric manifold) over the polytope $P^m\times Q^n$. It is intractable to answer the following question.

\begin{problem} Does there exist a combinatorial simple $n$-polytope $P$ admitting a characteristic map whose chromatic number is $2^n-1$ for $n\geq 5$?
\end{problem}

Quasitoric manifolds are always oriented, but small covers may fail to be. Nakayama and Nishimura found an orientability condition for a small cover \cite{Nakay}. Their criterium states that a small cover $M$ over $P^n$ is oriented if and only if there is a map $l$ that maps the facets of $P^n$ into nonzero vectors of $\mathbb{Z_2}^n$ such that for every face $F$ of $P^n$ $l (F)$ has an odd number of nonzero coordinates. Therefore, the following analogue of the inequality \ref{nej} holds.

\begin{proposition} If a simple $n$-polytope $P$ is the orbit space of an oriented small cover then
\begin{equation}\label{ne1}
\chi (P^n)\leq 2^{n-1}.
\end{equation}
\end{proposition}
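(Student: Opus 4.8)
The plan is to re-run the short argument behind inequality~\ref{nej}, but with a smaller supply of admissible characteristic vectors, the reduction coming from the Nakayama--Nishimura orientability criterion quoted above. So assume $P^n$ is the orbit space of an oriented small cover $M$. First I would use that criterion to fix a $\mathbb{Z}_2$-characteristic map $l\colon\{F_1,\dots,F_m\}\to\mathbb{Z}_2^n$ on $P^n$ such that each vector $l(F_i)$ has an odd number of nonzero coordinates; by Lemma~\ref{l1} one may even take $l$ to be of the form $(I_{n\times n}\,|\,\ast)$, although this is not needed. Observe that an odd-weight vector is automatically nonzero, so no value of $l$ is wasted on $0$, and $l$ is a bona fide characteristic map.

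Next I would count how many vectors of $\mathbb{Z}_2^n$ have odd weight: this number is $\sum_{k\text{ odd}}\binom{n}{k}=2^{n-1}$, exactly half of all binary $n$-tuples. Hence $l$ takes at most $2^{n-1}$ distinct values. Finally I would check that $l$ is itself a proper coloring of the facets of $P^n$: if two distinct facets $F_i$ and $F_j$ share a vertex $v$, then $l(F_i)$ and $l(F_j)$ appear among the $n$ columns of the submatrix $\Lambda_{\mathbb{Z}_2}^{(i_1,\dots,i_n)}(M)$ attached to $v$; since $\left|\det\Lambda_{\mathbb{Z}_2}^{(i_1,\dots,i_n)}(M)\right|=1$, this matrix is nonsingular over $\mathbb{Z}_2$, so its columns are pairwise distinct and $l(F_i)\neq l(F_j)$. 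Thus the facets of $P^n$ are properly colored with at most $2^{n-1}$ colors, i.e.\ $\chi(P^n)\leq 2^{n-1}$, which is inequality~\ref{ne1}.

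I do not expect a genuine obstacle here: the statement is essentially a corollary of the Nakayama--Nishimura criterion together with the elementary identity $\sum_{k\text{ odd}}\binom nk=2^{n-1}$. The only points requiring a little care are applying the criterion in the right normal form (``every characteristic vector has odd weight after a suitable change of basis'') and noticing that odd weight excludes the zero vector, so the palette really has only $2^{n-1}$ usable elements. An equivalent, basis-free version of the count would fix a functional $\mathbf w\in(\mathbb{Z}_2^n)^\ast$ with $\mathbf w\cdot l(F_i)=1$ for all $i$ and observe that the affine hyperplane $\{\mathbf x\in\mathbb{Z}_2^n : \mathbf w\cdot\mathbf x=1\}$ has exactly $2^{n-1}$ points.
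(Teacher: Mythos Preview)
Your argument is correct and is exactly the intended one: the paper does not spell out a separate proof but simply says that the Nakayama--Nishimura criterion makes the proposition an analogue of inequality~\ref{nej}, and you have written out that analogue in full detail. The only extra content you add---the explicit count $\sum_{k\text{ odd}}\binom{n}{k}=2^{n-1}$ and the verification that a characteristic map is a proper coloring---are precisely the points the paper leaves implicit.
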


The inequality is known to be sharp for $n=2$ and $n=3$ by \cite{Nakay}. We can apply the same argument as in the proof of Theorem \ref{main} to show the following claim.

\begin{theorem}\label{main2} There exists a combinatorial simple $4$-polytope $P$ such that $\chi (P)=8$ that is the orbit space of an oriented small cover.
\end{theorem}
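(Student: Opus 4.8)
The plan is to rerun the argument of Theorem \ref{main}, but carried out entirely inside the set $V$ of nonzero vectors of $\mathbb{Z}_2^4$ having an \emph{odd} number of nonzero coordinates. By the Nakayama--Nishimura criterion recalled above, a characteristic map on a simple $4$-polytope all of whose values lie in $V$ underlies an oriented small cover, so it suffices to produce such a polytope $P$ with $\chi(P)=8$. Note that $|V|=2^{4-1}=8$, the right-hand side of inequality \ref{ne1}: the eight vectors are $e_1,\dots,e_4$ together with the four sums $e_i+e_j+e_k$.

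First I would start, as in Theorem \ref{main}, from a neighborly simple $4$-polytope with $8$ facets — for instance the dual of $C^4(8)$ — and assign its facets \emph{bijectively} to the eight vectors of $V$. Since any two facets of a neighborly simple $4$-polytope have nonempty intersection, hence share a vertex, this bijection is automatically a proper colouring and $\chi=8$.

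The essential simplification over the proof of Theorem \ref{main} is that \textbf{no bad edge can occur}: the map $\mathbb{Z}_2^4\to\mathbb{Z}_2$ sending a vector to the parity of its number of nonzero coordinates is a linear functional, so the sum of three vectors of $V$ again lies in $V$, in particular is nonzero. Hence the non-singularity condition can fail only at a bad vertex $F_1\cap F_2\cap F_3\cap F_4$ with $v_{F_1}+v_{F_2}+v_{F_3}+v_{F_4}=0$, which I would resolve by truncation exactly as in Theorem \ref{main}. Truncating the bad vertex yields a simplex facet $F'$ with four new vertices $F_i\cap F_j\cap F_k\cap F'$, and — since no two of $v_{F_1},\dots,v_{F_4}$ coincide (properness) and no three of them sum to $0$ (the parity argument) — any three of them are linearly independent, so $W:=\span(v_{F_1},\dots,v_{F_4})$ is a hyperplane and non-singularity at all four new vertices is equivalent to $v_{F'}\notin W$. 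As $W$ contains the odd-weight vector $v_{F_1}$, the parity functional is not identically zero on $W$, so exactly four elements of $V$ lie in $W$ and exactly four lie outside it; I would take $v_{F'}$ to be one of the latter four. Then the four new vertices are non-singular, the value $v_{F'}$ is again in $V$, and $v_{F'}\neq v_{F_i}$ for every $i$, so (as $F'$ shares a vertex with no facet other than $F_1,\dots,F_4$) the colouring stays proper. No new bad vertex is created, so iterating over the finitely many bad vertices terminates in a simple $4$-polytope $P$ carrying a characteristic map with all values in $V$.

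It remains to check $\chi(P)=8$. One verifies that vertex truncation never destroys the property that the eight original facets pairwise share a vertex: if the only common vertex of $F_a$ and $F_b$ is a vertex $v$ being truncated, then $F_a$ and $F_b$ are among the four facets at $v$, and two of the new vertices still contain both. Hence $\chi(P)\ge 8$, while the explicit eight-colouring gives $\chi(P)\le 8$. By the Nakayama--Nishimura criterion, $P$ is then the orbit space of an oriented small cover, proving the theorem. The only points demanding care are the parity count ensuring that a legal $v_{F'}$ can always be found within the eight-element palette, and the verification that $\chi$ remains exactly $8$; both are short, and in fact the whole argument is \emph{easier} than that of Theorem \ref{main}, since restricting to odd-weight vectors rules out bad edges from the outset.
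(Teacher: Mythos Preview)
Your proposal is correct and follows the route the paper indicates: it simply says to apply the argument of Theorem~\ref{main}, and you carry this out restricted to the eight odd-weight vectors. Your parity observation ruling out bad edges is exactly what the paper invokes explicitly in the proof of Theorem~\ref{main3}, and your hyperplane count for choosing $v_{F'}\in V$ at a bad vertex is the natural refinement needed to stay within the odd-weight palette.
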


Indeed, the argument can be applied to the case of oriented $5$-dimensional small covers.

\begin{theorem}\label{main3} There exists a combinatorial simple $5$-polytope $P$ such that $\chi (P)=16$ that is the orbit space of an oriented small cover.
\end{theorem}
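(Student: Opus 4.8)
\textbf{Proof proposal for Theorem \ref{main3}.} The plan is to mimic the argument of Theorem \ref{main} one dimension up, but—crucially—to tighten the obstruction via the Nakayama–Nishimura orientability criterion so that only vectors with an odd number of nonzero coordinates are used. First I would start with a neighborly simple $5$-polytope with $16$ facets, for instance the dual of $C^5(16)$; such a polytope has chromatic number $16$ since every two facets meet, and $16 = 2^{5-1}$ is exactly the bound of inequality \ref{ne1}. I would surjectively assign to the $16$ facets the $16$ vectors of $\mathbb{Z}_2^5$ that have an odd number of nonzero entries (there are precisely $2^{5-1}=16$ such vectors, and the zero vector is not among them). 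This guarantees that the assignment $l$ respects the orientability condition on $1$-faces automatically; one still has to secure the non-singularity condition at vertices and, separately, verify that $l(F)$ has an odd number of nonzero coordinates for \emph{every} face $F$, not merely the facets.

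Next I would classify the failures of non-singularity exactly as in the proof of Theorem \ref{main}. In a simple $5$-polytope the non-singularity condition can break along a $k$-face $F_{i_1}\cap\dots\cap F_{i_{6-k}}$ for $k=0,1,2,3$: a bad $3$-edge (three facets whose vectors sum to zero), a bad $2$-face (four facets summing to zero), a bad edge governed by a $2$-face that is a polygon, and a bad vertex (five facets summing to zero). For each I would perform an iterated truncation, starting from the lowest-dimensional bad faces and working upward, exactly the move used in Theorem \ref{main}: truncating along a bad face $G = F_1\cap\dots\cap F_j$ creates a new facet $F'$ whose combinatorial type is $\Delta^{j-1}\times(\text{link of }G)$, and one must choose $v_{F'}$ so that (i) the non-singularity condition holds at all the new vertices, (ii) $v_{F'}$ has an odd number of nonzero coordinates, and (iii) the chromatic number stays $16$ because $G$ gets broken into a pairwise-but-not-totally-intersecting family. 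The counting that makes (i) possible is the same one as before: the relevant coordinate data of the ``old'' facets forces a small forbidden set, and there remain enough odd-weight vectors outside it to pick $v_{F'}$.

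The main obstacle—and the reason this theorem is stated separately rather than folded into Theorem \ref{main}—is the bad $2$-face in dimension $5$: as the remark after Theorem \ref{main} notes, a $2$-face can be an arbitrary polygon with many sides, so truncating it produces a new facet of the form $(\text{polygon})\times(\text{interval})$, and one must choose a single $v_{F'}$ compatible with the non-singularity condition at \emph{all} the new vertices simultaneously, each of which constrains $v_{F'}$ against a different quadruple of old facet vectors. I would handle this by arguing that, after first clearing all bad $3$-edges so that no three facets around the $2$-face sum to zero, the constraints coming from consecutive vertices of the polygon are mutually consistent: each vertex forbids at most a bounded number of the sixteen odd-weight vectors, and a direct check (or a pigeonhole/parity count) shows the union of forbidden sets cannot exhaust all of them, so an admissible odd-weight $v_{F'}$ exists. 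The remaining verification—that truncation preserves the ``$l(F)$ odd for every face'' property, hence the orientability of the resulting small cover—is routine, since a face of the truncated polytope is either an old face (where the property already held) or built from an old face together with $F'$, and $v_{F'}$ was chosen of odd weight so the relevant image subtori still satisfy the Nakayama–Nishimura condition. Iterating until no bad face remains yields the desired simple $5$-polytope $P$ with $\chi(P)=16$ carrying an oriented small cover.
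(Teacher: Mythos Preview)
Your proposal misses the single observation that makes the five-dimensional oriented case work. In $\mathbb{Z}_2^5$, the sum of three odd-weight vectors again has odd weight and hence is nonzero; consequently, for any $2$-face $F_1\cap F_2\cap F_3$ of the starting polytope the labels $v_{F_1},v_{F_2},v_{F_3}$ are automatically linearly independent, so \emph{bad $2$-faces do not occur at all}. This parity remark is precisely why the paper singles out the oriented case: the Nakayama--Nishimura restriction to odd-weight labels removes, for free, the one obstruction (polygonal $2$-faces) that blocks the unoriented argument in dimension~$5$. With bad $2$-faces excluded, the only possible failures of non-singularity are at edges (four facets whose labels sum to zero) and at vertices, and these are resolved by truncation exactly as in Theorem~\ref{main}. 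In fact, once bad edges are cleared there are no bad vertices either: any linear dependence among five distinct nonzero odd-weight vectors must involve a subset of even cardinality, and sizes $2$ and $4$ are ruled out by distinctness and by the absence of bad edges.

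By contrast, you treat the bad $2$-face as the ``main obstacle'' and propose to truncate it, hoping a pigeonhole count leaves an admissible odd-weight $v_{F'}$. This is both unnecessary and unjustified: a $2$-face in a neighborly simple $5$-polytope can be a polygon with many edges, each edge imposing its own non-singularity constraint on $v_{F'}$, and you give no argument that the sixteen odd-weight vectors survive all of these simultaneously. Your codimension bookkeeping is also scrambled (in a simple $5$-polytope the intersection of three facets is a $2$-face and the intersection of four is an edge, so your ``bad $2$-face (four facets summing to zero)'' and ``bad $3$-edge'' labels are swapped), and the Nakayama--Nishimura criterion concerns only the facet labels, not an odd-weight condition ``for every face''. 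The repair is not to work harder on the polygon case but to notice that it never arises.
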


\begin{proof} We use the same approach as in the proof of Theorem \ref{main}. We start with the dual of $C^4 (15)$ and decorate its facets by nonzero vectors of $\mathbb{Z_2}^5$ having an odd number of nonzero coordinates. Again the non-singularity condition may fail to be satisfied. Fortunately, there is no bad 2-face arising from the distribution of the vectors across the facets. If there is a bad $3$-polytope $F_1\cap F_2\cap F_3$, then the vector $v_{F_1}+v_{F_2}+v_{F_3}$ certainly has an odd number of nonzero coordinates so it cannot be the zero vector. Therefore, the non-singularity condition may fail only in bad vertices or edges. However, we can efficiently resolve such problems by truncation and procedure we already explained with appropriate modifications for the vector $v_{F'}$ associated with the new facet $F'$ coming from a bad edge.
\end{proof}

In the end, we give an example of a combinatorial $4$-polytope obtained by the algorithm described in the proof of Theorem \ref{main}.

\begin{example} Let us decorate the facets of the dual of the cyclic polytope $C^4 (15)$ in the following way\\
$ F_0\mapsto e_1,  F_1\mapsto e_1+e_2, F_2\mapsto e_3,  F_3\mapsto e_4,
 F_4\mapsto e_1+e_4,  F_5\mapsto e_1+e_2+e_4,  F_6\mapsto e_2+e_4, F_7\mapsto e_2,
 F_8\mapsto e_2+e_3,  F_{9}\mapsto e_1+e_2+e_3,  F_{10}\mapsto e_1+e_3,  F_{11}\mapsto e_1+e_3+e_4,
 F_{12}\mapsto e_3+e_4,  F_{13}\mapsto e_2+e_3+e_4,  F_{14}\mapsto e_1+e_2+e_3+e_4$,
where $e_1, e_2, e_3, e_4$ are the vector of the standard basis of $\mathbb{Z}_2^4$. By Gale's
evenness condition (see \cite[Theorem~3.]{Gale}), the vertices of this polytope are $F_0\cap F_x\cap F_{x+1}\cap F_{14}$ for all $1\leq x\leq 12$ and $F_x\cap F_{x+1}\cap F_y\cap F_{y+1}$ for all $x, y\in\{0, \dots, 13\}$ such that $y-x\geq 2$.

In such decoration, there are $13$ `bad' edges:\\
$ F_2\cap F_7\cap F_{8}, F_1\cap F_2\cap F_9, F_3\cap F_{10}\cap F_{11}, F_2\cap F_3\cap F_{12},
 F_4\cap F_5\cap F_{7}, F_7\cap F_9\cap F_{10}, F_7\cap F_{12}\cap F_{13}, F_0\cap F_3\cap F_{4},
 F_0\cap F_5\cap F_{6}, F_0\cap F_8\cap F_9, F_0\cap F_{11}\cap F_{12}, F_0\cap F_{13}\cap F_{14},
 F_0\cap F_1\cap F_{7}$,\\
and $17$ `bad' vertices:\\
$F_3\cap F_4\cap F_{5}\cap F_{6}, F_3\cap F_4\cap F_{8}\cap F_{9}, F_3\cap F_4\cap F_{11}\cap F_{12},
 F_3\cap F_4\cap F_{13}\cap F_{14}, F_4\cap F_5\cap F_{9}\cap F_{10}, F_4\cap F_5\cap F_{12}\cap F_{13},
 F_5\cap F_6\cap F_{8}\cap F_{9}, F_5\cap F_6\cap F_{11}\cap F_{12}, F_5\cap F_6\cap F_{13}\cap F_{14},
 F_6\cap F_7\cap F_{10}\cap F_{11}, F_8\cap F_9\cap F_{11}\cap F_{12}, F_8\cap F_9\cap F_{13}\cap F_{14},
 F_9\cap F_{10}\cap F_{12}\cap F_{13}, F_{11}\cap F_{12}\cap F_{13}\cap F_{14}, F_0\cap F_1\cap F_{4}\cap F_{5},
 F_0\cap F_1\cap F_{9}\cap F_{10}, F_0\cap F_1\cap F_{12}\cap F_{13}$.

Taking the truncations over the bad edges and vertices we produce a new polytope $P$ with 193 vertices, 386 edges, 228 ridges and 45 facets. Its face lattice can be easily explicitly obtained.
\end{example}

Small covers over the duals of cyclic polytopes do not exist when the difference between the number of facets and the dimension is greater than $4$ by \cite{Hasui}. However, the example of a simple neighborly $4$-polytope with 12, which is the orbit space of a small cover, was found by Barali\'{c} and Milenkovi\'{c} in \cite{BarMil}. We are curious if there is a simple neighborly $4$-polytope with 15 facets admitting a characteristic map.

\section*{Acknowledgements}

The author was supported by the Serbian Ministry of Science, Innovations and Technological Development through the Mathematical Institute of the Serbian Academy of Sciences and Arts. The authors are grateful to Ivan Limonchenko for the discussion and valuable comments.

\end{document}